\documentclass[11pt]{article}

\usepackage[
  backend=biber,
  style=alphabetic,
  sorting=nyt,
  doi=false,
  url=false,
  eprint=true,
  maxbibnames=99
]{biblatex}

\usepackage[margin=1in]{geometry}
\usepackage{amsmath,amssymb,amsthm,mathtools}
\usepackage{xcolor}
\usepackage[colorlinks=true,linkcolor=blue,citecolor=blue,urlcolor=blue]{hyperref}
\usepackage{microtype}

\definecolor{markcol}{HTML}{087E8B}
\definecolor{edgecol}{HTML}{4B5563}
\definecolor{branchfill}{HTML}{CDE9EA}

\numberwithin{equation}{section}

\theoremstyle{plain}
\newtheorem{theorem}{Theorem}
\newtheorem{lemma}[theorem]{Lemma}

\theoremstyle{definition}
\newtheorem{definition}[theorem]{Definition}

\title{Square-Difference-Free Sets beyond the Three-Quarter Barrier}
\author{Dmitry Krachun%
  \thanks{Princeton University. Email:
  \href{mailto:dk9781@princeton.edu}{\texttt{dk9781@princeton.edu}}}}
\date{}

\begin{document}

\maketitle

\begin{abstract}
Let $D(N)$ denote the largest cardinality of a subset of $\{1,\ldots,N\}$ containing no nonzero square difference. While a construction certifying $D(N)\geq (1-o(1))N^{1/2}$ is almost trivial, Erd\H{o}s conjectured that this bound is sharp up to polylogarithmic factors. This was disproved by S\'ark\"ozy and later again by Ruzsa, who found an elegant construction showing that $D(N)\geq c\cdot N^{0.733077\dots}$, with an absolute constant $c>0$. His approach was subsequently refined, leading to the previously best known lower bound with exponent $0.7334117\dots$ due to Beigel–Gasarch and, independently, Lewko. However, in the original paper Ruzsa observed that $3/4$ seems to be the natural barrier of his approach. 

In this paper we develop a new construction leading to the lower bound 
\[
 \liminf_{N\to\infty}\frac{\log D(N)}{\log N}
 \geq \alpha_*:= 0.7527964558\ldots;
\]
thus crossing the natural exponent-$3/4$ barrier of Ruzsa's method. The value $0.7527964558\ldots$ arises from a simple optimisation problem and appears to be the limit of the new approach.
\end{abstract}

\section{Introduction}

A set of natural numbers is \emph{square-difference-free} if the absolute difference between any two distinct elements is not a square. Write $D(N)$ for the maximal size of a square-difference-free subset of $\{1, 2, \dots, N\}$. Furstenberg and S\'ark\"ozy independently proved that $D(N)=o(N)$~\cite{Furstenberg1977,Sarkozy1978}. This bound was subsequently improved several times and the best currently known upper bound is $D(N)\ll N\exp(-c\sqrt{\log N})$ for some absolute $c>0$, due to Green and Sawhney~\cite[Theorem~1.1]{GreenSawhney2024}. We refer to the introduction of~\cite{GreenSawhney2024} for the history of the upper bounds. 

On the other hand, Erd\H{o}s had conjectured a much stronger bound $D(N)=O(N^{1/2}(\log N)^C)$ for some constant $C$. S\'ark\"ozy disproved this~\cite{Sarkozy1978II} and proposed instead that $D(N)=O_\varepsilon(N^{1/2+\varepsilon})$ for every $\varepsilon>0$; Ruzsa's subsequent construction disproved that prediction as well~\cite{Ruzsa1984} by showing that $D(N)>c\cdot N^{\alpha}$, where $\alpha=0.733077\dots$ and $c>0$ is an absolute constant.

% \begin{conjecture}\label{conj:gap}
% There exists $\varepsilon>0$, such that 
% \[
%  D(N)< N^{1-\varepsilon}
% \]
% holds for all $N>1$.
% \end{conjecture}

Ruzsa chooses a squarefree modulus $m$ and a set $R\subseteq\mathbb Z/m\mathbb Z$ containing no nonzero square difference. Restricting alternating digits in a base-$m$ expansion to $R$ and leaving the remaining digits free gives the exponent
\[
 \frac{1+\log_m|R|}{2};
\]
his choice $m=65$ and $|R|=7$ gives $0.733077\ldots$~\cite{Ruzsa1984}. Beigel--Gasarch and, independently, Lewko later used $m=205$ and $|R|=12$, raising this to
\[
 \frac12+\frac{\log 12}{2\log 205}
 =0.7334117970\ldots,
\]
see~\cite{BeigelGasarch2008,Lewko2015}. In his original paper~\cite{Ruzsa1984} Ruzsa also proved that $|R|<\sqrt{m}$ for all squarefree integers $m$ with all prime divisors congruent to 1 modulo 4 and further conjectured that $|R|\leq \sqrt{m}$ holds for all $m$. This makes $3/4$ a natural barrier for Ruzsa's construction. Moreover, there is some evidence that this latter exponent arising from modulus $205$ is the limit of this approach: in~\cite{Lewko2015} a maximal-clique search over all squarefree moduli $m\leq733$ was conducted and the choice $m=205$ and $|R|=12$ remained optimal in this range; more recently,  Georgiev, G\'omez-Serrano, Tao, and Wagner explicitly tasked AlphaEvolve with improving it; the system quickly recovered the same modulus $205$ but found no better example~\cite{GeorgievEtAl2025}. 

We develop a new approach also based on the base-$p$ expansion of numbers, but instead of using sets containing no nonzero square differences modulo $p$, we use a more involved construction based on sequences of residues $(s_0, \dots, s_{t-1})$ in $\mathbb{F}_p$ (with $p=4k+3$) with the property that $s_b-s_a$ is a nonzero quadratic residue modulo $p$ whenever $b>a$. We call such sequences \emph{Paley chains} as they naturally arise in Paley tournaments; see, e.g.,~\cite{Satake2021}. While for an individual prime $p$ a Paley chain never leads to an improved exponent, we efficiently combine different primes to eventually get an exponent larger than $3/4$. Specifically, our main result is the following.

\begin{theorem}\label{thm:main}
For the ten pairs
\[
 (p_i,t_i)=(3,2),(7,3),(11,4),(19,5),(23,5),
 (31,7),(43,7),(59,9),(71,9),(103,11),
\]
put $\alpha_i:=\tfrac{\log t_i}{\log p_i}$. Then 
\[
 \liminf_{N\to\infty}\frac{\log D(N)}{\log N}\geq \alpha_*,
\]
where 
\begin{equation}\label{eq:alpha-star-intro}
 \alpha_* := \frac{10+(1/\alpha_1+\cdots+1/\alpha_{10})}{1+2(1/\alpha_1+\cdots+1/\alpha_{10})}=0.752796455874514\ldots.
\end{equation}
\end{theorem}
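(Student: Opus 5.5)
The plan is to build an explicit square-difference-free set from digit expansions. The obstruction will come from quadratic-residue information at the lowest differing digit of $x-y$ for a suitable prime, combined with the sign of $x-y$. The basic fact used is that $d=x-y>0$ can be a square only if, for every prime $p$, $v_p(d)$ is even and the unit $d/p^{v_p(d)}$ is a quadratic residue mod $p$. For $p\equiv 3 \pmod 4$, $-1$ is a non-residue. So if $(s_0,\dots,s_{t-1})$ is a Paley chain, a digit difference $s_a-s_b$ is a residue when $a>b$ and a non-residue when $a<b$. Hence the residue character of the leading $p$-adic digit of $x-y$ is controlled by the \emph{order} of the chain indices. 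Our goal is to arrange that this order is always the opposite of the order of $x$ and $y$ as integers.

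\emph{Single-prime step.} Fix a prime $p\equiv 3\pmod 4$ with a Paley chain of length $t$, and an integer $n$. We write $x=\sum_j x_j p^j$. The even digits $x_{2j}$, $j<n$, are taken from the chain, say $x_{2j}=s_{a_j(x)}$, and the odd digits are free. The index vector $a(x)=(a_0,\dots,a_{n-1})\in[t]^n$ is additionally encoded in a high block $H(x)$ above all $p$-adic digits. We choose $H$ to be order-\emph{reversing} with respect to the colexicographic order that compares $a_j$ at the smallest $j$ where the vectors differ. Now take $x>y$ with $a(x)\neq a(y)$, and let $j$ be the first index where they differ. Then $H(x)<H(y)$ is impossible unless the low parts compensate, so we size $H$ to dominate and conclude $a_j(x)<a_j(y)$. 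Then $v_p(x-y)=2j$, either exactly or through a lower odd digit, which already forces an odd valuation, and the leading unit $s_{a_j(x)}-s_{a_j(y)}$ is a non-residue. In either case $x-y$ is not a square. If $a(x)=a(y)$, then $x-y$ is a multiple of the chain positions with free digits only, and this is handled by recursion or by requiring distinct labels.

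\emph{Combining the ten primes.} We run the construction simultaneously for all ten primes via CRT. Number $x$ is determined by residues modulo $\prod_i p_i^{2n_i}$ together with the high block. The key saving is that the high block need not pay for each prime separately. It should encode only one common ordering key of entropy about $c:=n_i\log t_i$, the same for every $i$, while each prime carries its own independent label of entropy $c$. Then for a pair $x>y$, the first prime, in a fixed priority order, whose label differs is the one where we need the reversed order, and the key is built lexicographically over primes in that order. We then optimise the parameters with $n_i\log t_i=c$ for all $i$. Counting gives $\log|A|\approx 10c+\sum_i n_i\log p_i=c(10+\sum 1/\alpha_i)$ and $\log N\approx c+2\sum_i n_i\log p_i=c(1+2\sum 1/\alpha_i)$. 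Their ratio is exactly $\alpha_*$, and we pass to general $N$ by monotonicity of $D$ up to constant factors.

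\emph{Main obstacle.} The delicate step is justifying the shared high block of cost only $c$ rather than $10c$. We must show that one integer-ordering key simultaneously makes the relevant prime's chain comparison anti-monotone for \emph{every} pair, while CRT mixes the $p_i$-adic digits of different primes in the actual integer. We would first try an iterative scheme: each level of the construction for prime $p_{i+1}$ treats the set built for $p_1,\dots,p_i$ as its ``free'' part, with the ordering inherited. We would then verify by induction on the number of primes that the size and exponent bookkeeping closes up to $N^{o(1)}$ losses. The existence of the listed Paley chains $(p_i,t_i)$ is a finite check, which we take as given or verify by explicit listing.
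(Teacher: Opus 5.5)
\textbf{There is a genuine gap in the combining step.} Your single-prime step and your final count are correct, and you rightly see that the whole gain must come from a shared high block of size about $e^{c}$ rather than $e^{10c}$. But the mechanism you propose for that block does not deliver it, and this is the heart of the theorem.

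A key ``built lexicographically over primes'' in a priority order has to be strictly order-reversing for a total order on all tuples of ten labels. It is therefore injective, with range $\prod_i t_i^{n_i}=e^{10c}$. At that cost the exponent is $\sum_i n_i\log(p_it_i)\big/\sum_i n_i\log(p_i^2t_i)$. This is a mediant of the one-prime exponents $(1+\alpha_i)/(2+\alpha_i)$, so it is at most $\log 6/\log 18\approx 0.62$. Conversely, a key taking only about $e^{c}$ values cannot be order-reversing at ``the first prime whose label differs'' for every pair. And the nested scheme you suggest, treating the set for $p_1,\dots,p_i$ as the free part for $p_{i+1}$, just stacks the high blocks again. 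As written, the proposal does not get past $0.62$, let alone $3/4$.

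\textbf{The missing idea is that you never need the orientation at one distinguished prime.} A nonzero square modulo $P=\prod_i p_i^{2n_i}$ is a square modulo every $p_i^{2n_i}$ simultaneously. Phrase your digit argument locally: if $x\not\equiv y \pmod{p_i^{2n_i}}$ and $y-x$ is a square mod $p_i^{2n_i}$, then $h_i(x)>h_i(y)$. Then for a square difference, every CRT coordinate in which $x$ and $y$ differ has its local rank strictly decreasing, and coordinates where they agree leave $h_i$ unchanged. Hence the additive, non-injective rank $h=\sum_i h_i$ strictly decreases along every nonzero square difference mod $P$. It takes only $1+\sum_i(t_i^{n_i}-1)\le 10e^{c}$ values.

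Now take the set $\{X+P\,h(X)\}$ with $0\le X<P$ running over your CRT residue set. It is square-difference-free: a positive square difference $(Y+Ph(Y))-(X+Ph(X))$ would force $h(X)>h(Y)$, making it at most $(P-1)-P<0$. This is exactly how the paper makes ``moduli multiply, ranks add'', and it yields $\alpha_*$ with your bookkeeping.

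A separate small point: your case $a(x)=a(y)$ needs no recursion. There the first differing digit is at an odd position, so the valuation is odd and the difference is not a square.
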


The rest of the article is organised as follows. In Section~\ref{sec:construction} we develop the construction of a square-difference-free set based on a Paley chain in $\mathbb{F}_p$, then explain how to glue several moduli together to get an improved exponent, and finally choose ten Paley chains with small prime moduli to prove Theorem~\ref{thm:main}. In Section~\ref{sec:optimality-discussion} we then briefly discuss why the exponent $\alpha_*$ is likely to be the limit of the present method.

\section{The construction}\label{sec:construction}
\begin{definition}
For a prime $p\equiv3\pmod4$, let $(\mathbb{F}_p^\times)^2$ be the set of nonzero quadratic residues in $\mathbb F_p$. We call an ordered tuple $S=(s_0,s_1,\ldots,s_{t-1})\in\mathbb{F}_p^t$ satisfying 

\begin{equation}\label{eq:paley-chain}
 s_b-s_a\in(\mathbb{F}_p^\times)^2
 \qquad(0\leq a<b<t)
\end{equation}  
a \emph{Paley chain} in $\mathbb{F}_p$.
\end{definition}
% \begin{remark}
% Equivalently, $S$ is a transitive subtournament of the Paley tournament, with its transitive ordering displayed.
% \end{remark}

\begin{lemma}[A local ranked block]\label{lem:local-block}
Let $p\equiv3\pmod4$ be prime, let $S=(s_0,\ldots,s_{t-1})$ be a Paley chain, and let $e\geq1$ be an integer. There is a set
\[
 \mathcal{C}(p,S,e)\subseteq\mathbb Z/p^{2e}\mathbb Z,
 \qquad |\mathcal{C}(p,S,e)|=(pt)^e,
\]
and a function
\[
 h:\mathcal{C}(p,S,e)\longrightarrow\{0,1,\ldots,t^e-1\}
\]
such that, whenever $x\neq y$ in $\mathcal{C}(p,S,e)$ and $y-x$ is a square modulo $p^{2e}$, one has $h(x)>h(y)$.
\end{lemma}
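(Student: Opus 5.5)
The set $\mathcal{C}(p,S,e)$ will be defined by base-$p$ digits: the even-indexed digits are constrained to lie in the Paley chain, and the odd-indexed digits are free. The function $h$ will read off the chain indices lexicographically, with the lowest digit position most significant and the order reversed.

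\textbf{Construction.} Every residue in $\mathbb Z/p^{2e}\mathbb Z$ has a unique representative $\sum_{i=0}^{2e-1} d_i p^i$ with $d_i\in\{0,\dots,p-1\}$. Identify $s_0,\dots,s_{t-1}$ with representatives in $\{0,\dots,p-1\}$. Let $\mathcal{C}(p,S,e)$ consist of those $x$ with
\[
 d_{2j}=s_{c_j(x)}\ \text{ for some } c_j(x)\in\{0,\dots,t-1\},
 \qquad d_{2j+1}\ \text{arbitrary},\qquad 0\le j<e .
\]
The $s_c$ are pairwise distinct, since $s_b-s_a\neq0$ for $a<b$, so $c_j(x)$ is well defined. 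Hence $|\mathcal{C}(p,S,e)|=(tp)^e$. Define
\[
 h(x)=\sum_{j=0}^{e-1}\bigl(t-1-c_j(x)\bigr)\,t^{e-1-j}\in\{0,\dots,t^e-1\}.
\]

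\textbf{Squares modulo $p^{2e}$.} If $z\not\equiv 0$ is a square modulo $p^{2e}$, write $z\equiv w^2$ with $v_p(w)=v$. Then $2v<2e$, and $z=p^{2v}u$ where $u$ is congruent, modulo $p^{2e-2v}$, to the square of a unit. So $v_p(z)$ is even and $z/p^{v_p(z)}$ is a nonzero quadratic residue modulo $p$. This direction needs only reduction mod $p$, not Hensel's lemma.

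\textbf{Verification.} Take $x\neq y$ in $\mathcal{C}(p,S,e)$ with $y-x$ a square. Let $i$ be the lowest digit position where $x$ and $y$ differ. Then
\[
 y-x\equiv (d_i(y)-d_i(x))\,p^i \pmod{p^{i+1}},
\]
with $d_i(y)-d_i(x)\not\equiv 0\pmod p$, so $v_p(y-x)=i<2e$ and $y-x\not\equiv0$.

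By the previous step, $i$ must be even, say $i=2j$, and $s_{c_j(y)}-s_{c_j(x)}$ must be a nonzero quadratic residue. Since $p\equiv 3\pmod 4$, $-1$ is a nonresidue. By \eqref{eq:paley-chain}, the difference $s_{c'}-s_c$ is a residue exactly when $c'>c$ and a nonresidue when $c'<c$. Hence $c_j(y)>c_j(x)$.

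For $j'<j$ the chain digits agree, since all lower digits agree, so $c_{j'}(x)=c_{j'}(y)$. Comparing $h(x)$ and $h(y)$ as base-$t$ numbers therefore gives $h(x)>h(y)$.

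\textbf{Main obstacle.} The only delicate point is the parity-of-valuation argument, which is what kills any difference first appearing at a free (odd) position. The Paley-chain property together with $-1$ being a nonresidue then orients every square difference consistently, so the lexicographic rank $h$ is strictly monotone along it.
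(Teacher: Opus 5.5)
Your proposal is correct and matches the paper's proof. It uses the same set $\mathcal{C}(p,S,e)$, with even digits in the chain and odd digits free, and the same reversed lexicographic rank $h$. The argument is also the same: the first differing digit must be at an even position, and there the Paley chain, together with $-1$ being a nonresidue, orients the difference. Your extra details (the $s_c$ being pairwise distinct, and the short derivation that a nonzero square modulo $p^{2e}$ has even valuation with a quadratic-residue unit part) fill in steps the paper states without proof.
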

\begin{proof}
Represent residues modulo $p^{2e}$ by their base-$p$ expansions
\[
 x=x_0+x_1p+\cdots+x_{2e-1}p^{2e-1},
 \qquad 0\leq x_j<p.
\]
Let $\mathcal{C}(p,S,e)$ consist of those residues for which every even-position digit $x_{2j}$ lies in $S$; all odd-position digits are unrestricted. This gives $|\mathcal{C}(p,S,e)|=(pt)^e$.

If $x_{2j}=s_{i_j}$, define
\begin{equation}\label{eq:local-height}
 h(x)=\sum_{j=0}^{e-1}(t-1-i_j)t^{e-1-j}.
\end{equation}
Note that $h(x)$ is strictly decreasing with respect to the usual lexicographic order on $(i_0,\ldots,i_{e-1})$. Suppose that $y-x$ is a nonzero square modulo $p^{2e}$, and let $r$ be the least base-$p$ position at which $x$ and $y$ differ. The $p$-adic valuation of a nonzero square modulo $p^{2e}$ is even, so $r=2j$. After division by $p^{2j}$, the leading digit $y_{2j}-x_{2j}$ is a nonzero quadratic residue modulo $p$. Write $x_{2j}=s_a$ and $y_{2j}=s_b$. Condition~\eqref{eq:paley-chain}, together with the fact that $-1$ is a nonresidue modulo $p$, forces $b>a$. Thus the first differing base-$t$ digit in \eqref{eq:local-height} is smaller for $y$ than for $x$, which implies that $h(x)>h(y)$.
\end{proof}

This lemma, while being very similar to the original approach of Ruzsa, does not directly lead to a large square-difference-free set in $\mathbb{Z}/p^{2e}\mathbb{Z}$. However, we can construct a large square-difference-free set inside $\{1, 2, \dots, p^{2e}\cdot t^{e}\}$ which projects to a translate of $\mathcal{C}(p,S,e)$ modulo $p^{2e}$. We present the construction in a slightly larger generality so that we can later apply it to several distinct primes together. 

\begin{lemma}[From a ranked block to integers]\label{lem:ranked-lift}
Let $P=q^2$ be a perfect square, and suppose that $\mathcal{C}\subseteq\mathbb Z/P\mathbb Z$ admits a function $h:\mathcal{C}\longrightarrow\{0,1,\ldots,H-1\}$ such that
\begin{equation}\label{eq:rank-descent-hypothesis}
 x, y \in \mathcal{C}, \quad x\neq y,\quad y-x\text{ a square modulo }P
 \quad\Longrightarrow\quad h(x)>h(y).
\end{equation}
Then, for every $L\geq1$, there is a square-difference-free set
\[
 \mathcal{A}_L\subseteq\{1,2,\ldots,(PH)^L\},
 \qquad |A_L|=|\mathcal{C}|^L.
\]
\end{lemma}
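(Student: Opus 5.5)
The idea is to write each integer in mixed form: a low part made of $L$ base-$P$ digits taken from $\mathcal{C}$, and a high part built from the heights of those digits. For $\mathbf c=(c_0,\ldots,c_{L-1})\in\mathcal{C}^L$, where each $c_l$ is represented by an integer in $[0,P)$, I would set
\[
 n(\mathbf c)=\sum_{l=0}^{L-1}c_lP^l\;+\;P^L\sum_{l=0}^{L-1}h(c_l)\,H^{L-1-l},
\]
and let $\mathcal{A}_L=\{n(\mathbf c)+1:\mathbf c\in\mathcal{C}^L\}$. The shift by $1$ only puts the set inside $\{1,\ldots,(PH)^L\}$ and does not change differences. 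We have $0\le n(\mathbf c)<P^L+P^L(H^L-1)=(PH)^L$. The low part recovers $\mathbf c$, so the map is injective and $|\mathcal{A}_L|=|\mathcal{C}|^L$. Note the deliberate reversal: digit index $l$ is low in the residue part but has weight $H^{L-1-l}$, so it is \emph{most significant} in the height part.

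Now suppose $n'=n(\mathbf c')>n=n(\mathbf c)$ with $n'-n=z^2$. Let $l$ be the least index with $c_l\neq c'_l$; such an index exists since $n\ne n'$.

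\emph{Valuation step.} Since $c_j=c'_j$ for $j<l$, we get $P^l\mid z^2$. Writing $P^l=q^{2l}$ and comparing prime valuations gives $q^l\mid z$. Put $z=q^lw$. Then $(n'-n)/P^l=w^2$, and reducing this modulo $P$ gives $c'_l-c_l\equiv w^2\pmod P$. Indeed, all remaining terms of the difference carry a factor $P^{l+1}$ or $P^L$ after division by $P^l$, and $l<L$.

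\emph{Applying the hypothesis.} So $c'_l-c_l$ is a square modulo $P$ with $c_l\ne c'_l$. By \eqref{eq:rank-descent-hypothesis} with $x=c_l$ and $y=c'_l$, we get $h(c_l)>h(c'_l)$.

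\emph{Contradiction.} The heights $h(c_j)$ and $h(c'_j)$ agree for $j<l$, and at index $l$ the primed one is strictly smaller. Since the index $l$ carries the largest weight among the differing positions, the high part of $n'$ is at most the high part of $n$ minus $1$. The low parts lie in $[0,P^L)$, so $n'<n$, contradicting $n'>n$. Hence no difference is a positive square, and $\mathcal{A}_L$ is square-difference-free.

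The main point needing care is the ordering of significance. The residue digits must be read from the bottom, so that the first differing digit controls the difference modulo $P^{l+1}$. The height digits must be read from the top, so that this same first differing digit decides which integer is larger. The valuation fact $q^{2l}\mid z^2\Rightarrow q^l\mid z$ also needs its one-line check, since $q$ need not be squarefree.
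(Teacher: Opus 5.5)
Your proposal is correct and follows essentially the same proof as the paper. You use the same encoding $X+P^Lh_L(X)$, with the height weights $H^{L-1-j}$ reversed relative to the residue digits. You also use the same valuation step $q^{2j}\mid z^2\Rightarrow q^j\mid z$ to get $h(c_l)>h(c'_l)$, and the same final bound $(P^L-1)-P^L=-1$. The only cosmetic difference is that the paper first proves $h_L(X)>h_L(Y)$ whenever $Y-X$ is a nonzero square modulo $P^L$, whereas you work directly with an integer square $z^2$.
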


\begin{proof}
For a word $(x_0,\ldots,x_{L-1})\in \mathcal{C}^L$, choose representatives $0\leq\overline{x_j}<P$ and set
\[
 X=\sum_{j=0}^{L-1}\overline{x_j}P^j,
 \qquad
 h_L(X)=\sum_{j=0}^{L-1}h(x_j)H^{L-1-j}.
\]
Suppose that $Y-X$ is a nonzero square modulo $P^L$, and let $j$ be the least base-$P$ position at which $X$ and $Y$ differ. Write
\[
 Y-X=P^j(\delta+PZ),\qquad \delta\not\equiv0\pmod P.
\]
If $Y-X\equiv z^2\pmod{P^L}$, then $P^j=q^{2j}$ divides both $Y-X$ and $P^L$, so $q^{2j}\mid z^2$, and hence $q^j\mid z$. Dividing by $P^j=q^{2j}$ shows that $\delta$ is a square modulo $P$. Therefore, $y_j-x_j$ is a square modulo $P$, which implies $h(x_j)>h(y_j)$. Since $j$ is the smallest index with $x_j\neq y_j$, this, in turn, implies that $h_L(X)>h_L(Y)$.

Now put
\[
 \mathcal{B}_L=\{X+P^Lh_L(X):(x_0,\ldots,x_{L-1})\in \mathcal{C}^L\}.
\]
The reductions of two distinct elements of $\mathcal{B}_L$ modulo $P^L$ are distinct: equality of the reductions would give the same word, hence also the same rank and the same element. Thus the preceding paragraph applies to any proposed nonzero square difference. If
\[
 [Y+P^Lh_L(Y)]-[X+P^Lh_L(X)]
\]
were a positive square, reduction modulo $P^L$ would imply $h_L(X)>h_L(Y)$, and the displayed integer would be at most
\[
 (P^L-1)-P^L=-1,
\]
a contradiction. Finally, $\mathcal{B}_L\subseteq\{0,\ldots,(PH)^L-1\}$ and has $|\mathcal{C}|^L$ elements. Translating by one proves the claim.
\end{proof}

Applied directly to Lemma~\ref{lem:local-block}, Lemma~\ref{lem:ranked-lift} gives the exponent
\begin{equation}\label{eq:one-prime-exponent}
 \frac{\log(pt)}{\log(p^2t)}
 =\frac{1+\alpha}{2+\alpha},
 \qquad \alpha=\frac{\log t}{\log p}.
\end{equation}
This is independent of $e$ or $L$. The chain $S=(0,1)$ at $p=3$ gives the best possible\footnote{This follows from the fact that any Paley chain in $\mathbb{F}_p$ has length bounded by $1+\sqrt{2p-1}$~\cite{Satake2021} together with an explicit computation for $p=7, 11, 19, 23$ and $31$. See also Section~\ref{sec:optimality-discussion} for the discussion about stronger bounds.} value of $\alpha$ leading to the exponent
\[
 \frac{\log6}{\log18}=0.619906\ldots,
\]
which is well below the previously known exponent. The improvement comes from combining several prime moduli. We glue together different moduli using the Chinese remainder theorem (CRT); the moduli and block cardinalities multiply, whereas the maximum rank adds, thus improving the bound.

% Trivially $t\leq p$, so \eqref{eq:one-prime-exponent} is at most $2/3$; the Paley bounds discussed in Remark~\ref{rem:global-barrier} give $t\leq p^{1/2+o(1)}$, and hence an asymptotic ceiling of $3/5$ for this one-prime construction. The improvement beyond $3/4$ is therefore genuinely a many-modulus effect.

\begin{lemma}[Gluing ranked blocks] \label{lem:gluing}
For $1\leq i\leq\ell$, let $P_i$ be pairwise coprime perfect squares, let $\mathcal{C}_i\subseteq\mathbb Z/P_i\mathbb Z$, and suppose that
\[
 h_i:\mathcal{C}_i\longrightarrow\{0,1,\ldots,H_i-1\}
\]
strictly decreases along every nonzero square difference, i.e. if $x, y \in \mathcal{C}_i$ are distinct and $y-x$ is a square modulo $P_i$, then one has $h_i(x)>h_i(y)$. Under the Chinese remainder identification, set
\[
 P=\prod_{i=1}^{\ell}P_i,\qquad
 \mathcal{C}=\prod_{i=1}^{\ell}\mathcal{C}_i \subseteq \mathbb{Z}/P\mathbb{Z},\qquad
 h(x_1,\ldots,x_\ell)=\sum_{i=1}^{\ell}h_i(x_i),
\]
and put
\[
 H=1+\sum_{i=1}^{\ell}(H_i-1).
\]
Then $h:\mathcal{C}\longrightarrow\{0,\ldots,H-1\}$ strictly decreases along every nonzero square difference modulo $P$.
\end{lemma}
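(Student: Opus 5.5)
The argument has two parts: checking that $h$ takes values in $\{0,\ldots,H-1\}$, and checking that $h$ strictly decreases along square differences. The range is immediate: each $h_i(x_i)\le H_i-1$, so $h(x)\le\sum_i(H_i-1)=H-1$. For the monotonicity, take distinct $x,y\in\mathcal{C}$ with $y-x$ a square modulo $P$. Write $x=(x_1,\ldots,x_\ell)$ and $y=(y_1,\ldots,y_\ell)$ under the CRT identification $\mathbb Z/P\mathbb Z\cong\prod_i\mathbb Z/P_i\mathbb Z$, which holds because the $P_i$ are pairwise coprime.

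The key step is a coordinatewise reduction. If $y-x\equiv z^2\pmod P$, then reducing modulo $P_i$ (which divides $P$) gives $y_i-x_i\equiv z^2\pmod{P_i}$. So every coordinate difference $y_i-x_i$ is a square modulo $P_i$.

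Now split the indices. Let $I=\{i: x_i\ne y_i\}$; since $x\neq y$, $I$ is nonempty. For $i\in I$, the hypothesis on $h_i$ gives $h_i(x_i)>h_i(y_i)$. For $i\notin I$, we have $x_i=y_i$, so $h_i(x_i)=h_i(y_i)$. Summing over $i$ gives $h(x)>h(y)$, with strictness coming from $I\ne\emptyset$.

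There is no real obstacle here. The two points needing a word of care are these. First, a square modulo $P$ stays a square modulo every divisor $P_i$; this is automatic, since we only reduce. Second, the hypothesis on each $h_i$ is applied only to coordinates that are genuinely distinct, so the value $0$ being a square in equal coordinates causes no trouble. Notably, the perfect-square assumption on the $P_i$ is not used in this lemma itself; it matters only when the result is fed into Lemma~\ref{lem:ranked-lift}, where $P=\prod_i P_i$ must again be a perfect square.
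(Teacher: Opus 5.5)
Your proof is correct and follows essentially the same route as the paper: reduce a square modulo $P$ to each CRT coordinate, note that equal coordinates leave $h_i$ unchanged and distinct ones strictly decrease it, and sum over coordinates, using $x\neq y$ for strictness. Your explicit range check $h\le H-1$ and your remark that the perfect-square hypothesis is only needed later in Lemma~\ref{lem:ranked-lift} are accurate additions that the paper leaves implicit.
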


\begin{proof}
If $y-x$ is a square modulo $P$, then its reduction in every CRT coordinate is a square modulo $P_i$. A zero coordinate leaves the corresponding rank unchanged, whereas a nonzero coordinate strictly decreases it. Since $x\neq y$, at least one coordinate is nonzero. Summing the local inequalities proves $h(x)>h(y)$.
\end{proof}

Let $p_1, \dots, p_\ell$ be distinct primes congruent to 3 modulo 4. For each prime $p_i$ let $t_i$ be the length of a chosen Paley chain in $\mathbb{F}_{p_i}$. Combining Lemmas~\ref{lem:local-block} and \ref{lem:gluing} at primes $p_1, \dots, p_\ell$ gives
\begin{equation}\label{eq:block-data}
 P=\prod_{i=1}^\ell p_i^{2e_i},\qquad
 |\mathcal{C}|=\prod_{i=1}^\ell (p_it_i)^{e_i},\qquad
 H=1+\sum_{i=1}^\ell (t_i^{e_i}-1). 
\end{equation}
The integer sets supplied by Lemma~\ref{lem:ranked-lift} consequently have exponent
\begin{equation}\label{eq:block-exponent}
 \alpha(\boldsymbol e)=
 \frac{\displaystyle\sum_{i=1}^\ell e_i\log(p_it_i)}
 {\displaystyle2\sum_{i=1}^\ell e_i\log p_i+
  \log\!\left(1+\sum_{i=1}^\ell(t_i^{e_i}-1)\right)}.
\end{equation}

\begin{proof}[Proof of Theorem~\ref{thm:main}]
We use the following ten Paley chains. Every forward difference in a displayed row is a nonzero quadratic residue modulo the prime in that row.

\begin{center}
\small
\renewcommand{\arraystretch}{1.16}
\begin{tabular}{c@{\qquad}c@{\qquad}l}
$p$ & $t$ & ordered chain $S$\\
\hline
$3$   & $2$  & $(0,1)$\\
$7$   & $3$  & $(0,4,1)$\\
$11$  & $4$  & $(0,3,1,4)$\\
$19$  & $5$  & $(0,5,11,9,16)$\\
$23$  & $5$  & $(0,18,1,3,4)$\\
$31$  & $7$  & $(0,25,14,1,19,8,2)$\\
$43$  & $7$  & $(0,31,9,23,4,40,1)$\\
$59$  & $9$  & $(0,49,15,7,16,19,35,36,5)$\\
$71$  & $9$  & $(0,8,12,18,48,27,1,37,20)$\\
$103$ & $11$ & $(0,79,25,58,55,81,4,1,34,83,59)$
\end{tabular}
\end{center}

For a real parameter $U>0$ large enough, take
\begin{equation}\label{eq:asymptotic-multiplicities}
 e_i(U)=\left\lfloor\frac{U}{\log t_i}\right\rfloor.
\end{equation}
Using Lemma~\ref{lem:local-block} we can construct ten local ranked blocks with these multiplicities. Lemma~\ref{lem:gluing} combines them into a ranked set $\mathcal{C}$ modulo the perfect square $P$, with rank bounded by $H$ as in \eqref{eq:block-data}. We have $e_i(U)\log t_i=U+O(1)$ and
\[
 \log\!\left(1+\sum_i(t_i^{e_i(U)}-1)\right)=U+O(1).
\]
Substitution into \eqref{eq:block-exponent} shows that
\[
 \alpha(\boldsymbol e(U))\longrightarrow
 \frac{\displaystyle\sum_i\frac{\log(p_it_i)}{\log t_i}}
 {\displaystyle1+2\sum_i\frac{\log p_i}{\log t_i}}
 =\alpha_*.
\]
Given $\rho<\alpha_*$, choose $U$ so that $\alpha(\boldsymbol{e}(U))>\rho$, and then let $L$ tend to infinity in Lemma~\ref{lem:ranked-lift} to construct, for 
\begin{equation}\label{eq:form-of-N}
N_L=(P(U)H(U))^L=\left[\left(\prod_{i=1}^{10} p_i^{2e_i(U)}\right)\left(1+\sum_{i=1}^{10}(t_i^{e_i(U)}-1)\right)\right]^L,
\end{equation}
a square-difference-free subset $\mathcal{A}_L\subseteq \{1, 2, \dots, N_L\}$ of size at least $N_L^\rho$. Rounding arbitrary $N$ down to the largest $N_L$ not exceeding $N$, i.e. choosing $L:=\lfloor\log{N}/\log{(P(U)H(U))}\rfloor$, using the corresponding set $\mathcal{C}_L$, and finally letting $\rho \nearrow \alpha_*$ we conclude that 
\[
\liminf_{N\to\infty}\frac{\log D(N)}{\log N}\geq \alpha_*. 
\]
\end{proof}
\section{Discussion of optimality}\label{sec:optimality-discussion}

In this section we briefly discuss, without giving precise proofs, why $\alpha_*$ is likely to be the limit of the current approach based on gluing various prime moduli and Paley chains. It is not difficult to see that the choice \eqref{eq:asymptotic-multiplicities} is asymptotically optimal as $U\rightarrow\infty$ for the displayed set of ten pairs $(p_i, t_i)$. More generally, since 
\[
\log{\left(1+\sum_i (t_i^{e_i}-1)\right)}
\geq 
\max_i e_i\log{t}_i,
\]
we always have 
\[
\alpha(\boldsymbol e)=
 \frac{\displaystyle\sum_{i=1}^\ell e_i\log(p_it_i)}
 {\displaystyle2\sum_{i=1}^\ell e_i\log p_i+
  \log\!\left(1+\sum_{i=1}^\ell(t_i^{e_i}-1)\right)}
  \leq
  \frac{\displaystyle\sum_{i=1}^\ell e_i\log(p_it_i)}
 {\displaystyle2\sum_{i=1}^\ell e_i\log p_i+\max_i e_i\log{t}_i}=:\overline{\alpha}(\boldsymbol{e});
\]
and on the other hand, given any tuple $\boldsymbol{e}$ of non-negative \emph{real} numbers, we have $\alpha(\lfloor U \boldsymbol{e}\rfloor) \rightarrow \overline{\alpha}(\boldsymbol{e})$, as $U$ tends to infinity, where $\lfloor U \boldsymbol{e}\rfloor$ is obtained by scaling all coordinates of $\boldsymbol{e}$ by $U$ and then taking coordinatewise floors. Thus, the problem is reduced to finding the supremum of $\overline{\alpha}(\boldsymbol{e})$ on $\mathbb{R}_{\geq 0}^{\ell}\setminus \{0\}^\ell$. Note that even though Lemma~\ref{lem:local-block} requires $e_i\geq 1$, we can allow some coordinates of $\boldsymbol{e}$ to be equal to zero by dropping the corresponding primes.

This optimisation problem can be solved explicitly and leads to the following result. Given a sequence of pairs $(p_i, t_i)$ ordered by decreasing $\alpha_i=\tfrac{\log t_i}{\log p_i}$, an optimal choice of the exponents is given by $e_i \propto 1/\log{t_i}$ for $i\leq k$ and $e_i=0$ for $i>k$ where $k$ is chosen to maximise 
\begin{equation}\label{eq:prefix-values}
\frac{ k+(1/\alpha_1+\cdots+ 1/\alpha_k)}{1+2(1/\alpha_1+\cdots+ 1/\alpha_k)}.    
\end{equation}

Formula \eqref{eq:prefix-values} gives a simple procedure for choosing pairs $(p_i, t_i)$ to optimise the resulting exponent. For a prime $p\equiv3\pmod4$, let $t(p)$ be the size of the largest Paley chain modulo $p$. It is relatively easy to compute the values $t(p)$ for all primes $p$ not exceeding a thousand; see also~\cite{SanchezFlores1998} for the list of values. Ordering pairs $(p, t(p))$ for $3\leq p\leq 991$ by the value $\alpha(p):=\tfrac{\log t(p)}{\log p}$, we obtain
\[
\begin{aligned}
(3,2)\succ (11,4)\succ (31,7)\succ (7,3)
\succ (19,5)
\succ (59,9)\succ (103,11)\succ (43,7) 
\succ (71,9)\succ (23,5) 
\succ\cdots, %\succ (47, 7) \succ (79,9)\succ \cdots .
\end{aligned}
\]
and a direct computation shows that choosing $k=10$ maximises the value of \eqref{eq:prefix-values}, thus giving the optimal construction among primes below 1000. To turn this into a complete proof of optimality, it would be sufficient to prove that, for any $p>1000$, one has 
\begin{equation}\label{eq:desired-tp-bound}
t(p)<p^{2\alpha_*-1}=p^{0.50559291\dots}.
\end{equation}
Indeed, assuming there exists a set of primes $p_1, \dots, p_\ell$ leading to an exponent larger than $\alpha_*$, consider such a set of minimal cardinality; then the maximum is given by 
\[
\frac{\ell+(1/\alpha_1+\cdots+1/\alpha_\ell)}{1+2(1/\alpha_1+\cdots+1/\alpha_\ell)} > \alpha_*.
\]
Since we already know the optimality of $\alpha_*$ for primes below 1000, at least one of the $p_i$ must exceed 1000. Then, assuming \eqref{eq:desired-tp-bound} holds, we have $\alpha_i< 2\alpha_*-1$ for some $i\in \{1, 2, \dots, \ell\}$. Removing $p_i$ from the set will strictly increase the value of the exponent, as the numerator decreases by $1+1/\alpha_i$ and the denominator decreases by $2/\alpha_i$ and $\tfrac{1+1/\alpha_i}{2/\alpha_i}<\alpha_*$. This contradicts the minimality of the example, showing that $\alpha_*$ is the best exponent among all subsets of the primes.   

The table of values $t(p)$ for $p$ up to a thousand shows that \eqref{eq:desired-tp-bound} holds for all primes in the range $103< p < 1000$,  and, indeed, for all primes up to a thousand except the ten primes we chose. Moreover, although $\alpha(p)=\tfrac{\log{t(p)}}{\log{p}}$ is not monotone, its computed values exhibit a clear overall downward trend, dropping to $0.42$ for $p$ around 1000. This suggests that $t(p)<\sqrt p$ for every $p>103$, which would imply \eqref{eq:desired-tp-bound} for all $p>1000$. However, the best general bound available is due to Satake, who proved that $t(p)\leq 1+\sqrt{2p-1}$~\cite{Satake2021}. This bound is asymptotically better than \eqref{eq:desired-tp-bound} but worse for all $p<8.16\times 10^{26}$, leaving a huge gap.

\printbibliography

% \bibliographystyle{plain}
% \bibliography{references.bib}

\end{document}